\newtheorem{theorem}{Theorem}[section]
\newtheorem{proposition}[theorem]{Proposition}
\newtheorem{lemma}[theorem]{Lemma}
\newtheorem{example}[theorem]{Example}
\newtheorem{conjecture}[theorem]{Conjecture}
\newtheorem{remark}[theorem]{Remark}
\newcommand{\cH}{\mathcal{H}}
\newcommand{\Ker}{\text{Ker }}
\newcommand{\mat}[1]{\begin{bmatrix} #1 \end{bmatrix}}
\title[Indefinite determinantal Representations]{Indefinite determinantal representations versus nonsingularities on the noncommutative $d$-torus}
\author[G.J. Groenewald]{Gilbert J. Groenewald}
\address{G.J. Groenewald, School of Mathematical and Statistical Sciences,
North-West University,
Research Focus: Pure and Applied Analytics,
Private Bag X6001,
Potchefstroom 2520,
South Africa}
\email{Gilbert.Groenewald@nwu.ac.za}
\author[S. ter Horst]{Sanne ter Horst}
\address{S. ter Horst, School of Mathematical and Statistical Sciences,
North-West University,
Research Focus: Pure and Applied Analytics,
Private~Bag X6001,
Potchefstroom 2520,
South Africa
and
DSI-NRF Centre of Excellence in Mathematical and Statistical Sciences (CoE-MaSS),
Johannesburg,
South Africa}
\email{Sanne.TerHorst@nwu.ac.za}
\author[H.J. Woerdeman]{Hugo J. Woerdeman}
\address{H.J. Woerdeman, Department of Mathematics, Drexel University, Philadelphia, PA 19104, USA}
\email{hugo@math.drexel.edu}
\thanks{This work is based on the research supported in part by the National Research Foundation of South Africa (Grant Numbers 118513 and 127364). In addition, HW is partially supported by NSF grant DMS-2000037.}
\subjclass[2010]{15A15, 47A13; Secondary: 13P15}
\keywords{$J$-contractive; Determinantal representation; Multivariable polynomial}
\begin{document}

\begin{abstract}
We show that for a multivariable polynomial $p(z)=p(z_1, \ldots , z_d)$ with a determinantal representation 
$$ p(z) = p(0) \det (I_n- K (\oplus_{j=1}^d z_j I_{n_j}))$$ the matrix $K$ is structurally similar to a strictly $J$-contractive matrix for some diagonal signature matrix $J$ if and only if the extension of $p(z)$ to a polynomial in $d$-tuples of matrices of arbitrary size given by 
\[
p(U_1, \ldots , U_d) = p(0,\ldots,0) \det (I_n\otimes I_m- (K\otimes I_m) (\oplus_{j=1}^d I_{n_j}\otimes U_j)),
\]
where $U_1,\ldots , U_d \in {\mathbb C}^{m \times m}$, $m\in {\mathbb N}$, does not have roots on the noncommutative $d$-torus consisting of $d$-tuples $(U_1, \ldots , U_d)$ of unitary matrices of arbitrary size.
\end{abstract}

\maketitle

\section{Introduction}\label{S:Intro}

The problem of writing polynomials as determinants of linear matrix pencils has attracted a lot of attention in the past few decades; see, e.g., \cite{Kummert, GKW2012, GKVW2,GKVW3, GKVW2013, GKVW1, Woerdeman2013, GIK, Knese2, Knese3}. In this context, for a multivariable polynomial $p(z)=p(z_1, \ldots , z_d)$ in commutative variables $z=(z_1, \ldots , z_d)$, with $p(0)\neq 0$, one seeks for a {\em determinantal representation} of the form 
\begin{equation}\label{DetRep}
p(z) = p(0) \det (I_n- K (\oplus_{j=1}^d z_j I_{n_j}))
\end{equation}
for an $n \times n$ matrix $K$ with $n=n_1 + \cdots + n_d$, and one studies how properties of the matrix $K$ translate to properties of the polynomial $p(z)$, and vice-versa. For instance, it is easy to see that when $K$ is a contraction (resp.\ strict contraction), then $p(z)$ has no roots on the open polydisk $\mathbb{D}^d$ (resp.\ closed polydisk $\overline{\mathbb{D}}^d$). Conversely, for $d=2$, it was shown in  \cite{Kummert} that a nonzero bivariate polynomial $p(z_1,z_2)$ with no roots on $\mathbb{D}^2$ (resp.\ $\overline{\mathbb{D}}^2$) always has a determinantal representation with $K$ a contraction (resp.\ strict contraction). The proof was streamlined in \cite{GKVW2013}. 

In the present paper we focus on determinantal representations for which the matrix $K$ has an (in)definite norm constraint for which $p(z)$ cannot have roots on the $d$-torus $\mathbb{T}^d$. Recall that a matrix $J$ satisfying $J^*=J=J^{-1} $ is called a {\em signature matrix}. A matrix $K$ is called a {\em strict $J$-contraction} if $J-KJK^* >0$. If $K$ in \eqref{DetRep} is a strict $J$-contraction for a diagonal signature matrix $J$, then $p(z)$ has no roots on $\mathbb{T}^d$. In \cite{JW} it was conjectured that for $d=2$, the converse holds as well. 

\begin{conjecture}\label{JW}\cite[Conjecture 3.1]{JW} Let $p(z_1,z_2)$ be a nonzero polynomial of degree $(n_1,n_2)$. Then $p(z_1,z_2)$ is without roots in $\{ (0,0) \} \cup {\mathbb T}^2$ if and only if there is a diagonal signature matrix $J$ and a strict $J$-contraction $K$ so that 
\begin{equation}\label{Jdetrep} 
p(z_1, z_2) = p(0,0) \det (I-K (z_1I_{n_1}\oplus z_2 I_{n_2})). 
\end{equation}
\end{conjecture}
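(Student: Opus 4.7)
The ``if'' direction is essentially recorded in Section~1 of the present paper: a strict $J$-contractive $K$ with diagonal $J$ conformal to the block partition $(n_1,n_2)$ forces $p$ to be root-free on $\mathbb{T}^2$, and the representation formula already guarantees $p(0,0)\neq 0$. The substance of the conjecture thus lies entirely in the converse, which is what I concentrate on below.

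For the converse, my plan is to bootstrap from the noncommutative characterization announced in the abstract. The strategy splits into two stages. First, establish the reduction that, for $d=2$, every nonzero $p(z_1,z_2)$ without roots on $\{(0,0)\}\cup\mathbb{T}^2$ admits \emph{some} determinantal representation of the form \eqref{DetRep} whose noncommutative extension is nonsingular at every pair of unitary matrices of arbitrary size. Second, apply the main theorem of the paper to that representation to produce an $n\times n$ matrix structurally similar to a strict $J$-contraction, and then absorb the similarity into a basis change of the realization to recover the exact form in \eqref{Jdetrep}. Stage two is immediate once stage one is in hand. For stage one, I would start from any minimal commutative determinantal representation of $p$ (existence for $d=2$ follows from classical Bezoutian and Schur-complement constructions), decompose $p$ according to how its zero set sits relative to $\overline{\mathbb{D}}^2$---via an inner--outer or Wiener--Hopf factorization across the bi-circle---so that each factor carries a definite sign, and finally amalgamate the factors into a single representation through a Schur-complement assembly designed to preserve noncommutative nonsingularity.

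The chief obstacle is precisely this noncommutative lift. Scalar stability on $\mathbb{T}^2$ does not automatically transfer to matrix-valued unitary tuples, and Kummert's contractive construction cannot be transcribed verbatim because an indefinite $J$ allows $p$ to vanish both inside and outside $\overline{\mathbb{D}}^2$. The hardest subgoal reduces to a \emph{signed} analogue of Agler's sum-of-squares decomposition for polynomials stable on the bi-circle rather than on the closed bidisk; such a tool is, to the best of my knowledge, not yet in the literature and would have to be developed ad hoc, most likely by analysing the Bezoutian of $p$ in each variable separately and tracking the signature of the resulting kernel matrix through the inner--outer factorization. A parallel route I would attempt is to construct $K$ directly by seeking a signed Agler-type identity for $|p(0,0)|^2-|p(z_1,z_2)|^2$ on $\mathbb{T}^2$ whose signature pattern matches the desired $J$, and then realizing $K$ as the associated transfer-function state-space matrix; the difficulty there is again to prove existence of such a signed decomposition in the first place.
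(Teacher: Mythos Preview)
The statement you are attempting to prove is presented in the paper as an \emph{open conjecture} (Conjecture~\ref{JW}, quoted from \cite{JW}); the paper does not prove it. There is therefore no ``paper's own proof'' to compare against. The paper's main result, Theorem~\ref{T:main}, answers a different question: for a \emph{given} determinantal representation with matrix $K$, it characterizes when $K$ is structurally similar to a strict $J$-contraction, namely precisely when the noncommutative extension of \emph{that particular} representation is nonsingular at all unitary tuples. It does not show how to manufacture such a representation starting only from the scalar hypothesis that $p$ has no roots on $\{(0,0)\}\cup\mathbb{T}^2$.

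Your proposal is honest about this: you identify ``stage one''---producing \emph{some} representation whose noncommutative extension is nonsingular at every unitary pair---as the entire content of the converse, and you concede that the required tool (a signed Agler-type decomposition for polynomials stable on $\mathbb{T}^2$ rather than on $\overline{\mathbb{D}}^2$) is not known. That is a genuine gap, not a technicality. Example~\ref{ex} in the paper underscores it: even for a polynomial with no zeros on $\overline{\mathbb{D}}^2$, a perfectly good minimal commutative representation can fail the noncommutative test, so starting from ``any minimal commutative determinantal representation'' and hoping to repair it is not enough. Your suggested routes (inner--outer factorization across the bi-circle, signature tracking through Bezoutians, a signed identity for $|p(0,0)|^2-|p(z_1,z_2)|^2$) are reasonable directions to explore, but none of them is carried out, and each hides the same unresolved core: why should a signed sum-of-squares certificate of the correct signature pattern exist at all?

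In summary, the conjecture remains open in the paper, and your proposal does not close it; the missing step you flag is exactly the one that is unresolved.
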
 

In \cite{Jackson} evidence was given for the above conjecture by proving partial results. Inspired by this conjecture, we became interested in matrices $K\in\mathbb{C}^{n \times n}$ and decompositions $n=n_1+\cdots+n_d$ for which the polynomial $p(z)$ given by \eqref{DetRep} has no roots on $\mathbb{T}^d$. This happens when $K$ is strictly $J$-contractive for some diagonal signature matrix $J$, and also if $K$ is {\em structurally similar} to a strict $J$-contraction, that is, when there exists an invertible $S=\oplus_{j=1}^d S_j$ with $S_j\in\mathbb{C}^{n_j \times n_j}$, $j=1,\ldots, d$, such that $ SKS^{-1}$ is a strict $J$-contraction. However, it can happen that $K$ is not structurally similar to a strict $J$-contraction, while $p(z)$ does not have roots on $\mathbb{T}^d$; for $d=2$ Example \ref{ex} below gives a polynomial $p(z)$ with no roots inside $\overline{\mathbb{D}}^2$ and a determinantal representation with a matrix $K$ that is not structurally similar to a strict $J$-contraction, and another determinantal representation where the matrix is a strict $J$-contraction. To distinguish between these two cases, it turns out useful to extend the determinantal representation \eqref{DetRep} to a {\em noncommutative determinantal representation} in which one evaluated over matrices of arbitrary sizes:
\begin{equation}\label{NC-DetRep}
p(U_1, \ldots , U_d) = p(0,\ldots,0) \det (I_n\otimes I_m- (K\otimes I_m) (\oplus_{j=1}^d I_{n_j}\otimes U_j)),
\end{equation}
where $U_1,\ldots , U_d \in {\mathbb C}^{m \times m}$, $m\in {\mathbb N}$. Restricting \eqref{NC-DetRep} to $U_j\in\mathbb{C}^{1 \times 1}$, we recover \eqref{DetRep}. Different choices of $K$ in \eqref{DetRep}, for the same polynomial $p(z)$, can lead to different noncommutative determinantal representations \eqref{NC-DetRep}. However, if two matrices are structurally similar, then their extensions to noncommutative determinantal representations must be the same. As it turns out, the behaviour of determinantal representations in the commutative setting described above, as illustrated in Example \ref{ex} below, cannot occur if one considers their noncommutative extensions, and replaces roots on the $d$-torus with the noncommutative analogue of $d$-tuples of unitary matrices of arbitrary size, as follows from our main result.

\begin{theorem}\label{T:main}
Let $p(z_1,\ldots,z_d)$ be a multivariable polynomial that is given by a determinantal representation
$$ p(z_1, \ldots , z_d) = p(0,\ldots,0) \det (I_n- K (\oplus_{j=1}^d z_j I_{n_j})), $$
with $K \in {\mathbb C}^{n\times n}$ and $n=\sum_{j=1}^d n_j$. Define
$$ p(U_1, \ldots , U_d) = p(0,\ldots,0) \det (I_n\otimes I_m- (K\otimes I_m) (\oplus_{j=1}^d I_{n_j}\otimes U_j)),$$
where $U_1,\ldots , U_d \in {\mathbb C}^{m \times m}$, $m\in {\mathbb N}$.  Then the following are equivalent.
\begin{itemize}
\item[(i)]
$p(U_1, \ldots , U_d)\neq 0$ for all tuples of unitaries $(U_1, \ldots , U_d) \in ({\mathbb C}^{m\times m})^d$, $m\in {\mathbb N}$. 
\item[(ii)] $p(U_1, \ldots , U_d)\neq 0$ for all tuples of unitaries $(U_1, \ldots , U_d) \in ({\mathbb C}^{m\times m})^d$, $m=1,\ldots,n$.
\item[(iii)] There exists an invertible $S=\oplus_{j=1}^d S_j$, with $S_j\in{\mathbb C}^{n_j\times n_j}$, $j=1,\ldots, d$, such that 
$ S^{-1}KS$ is a strict $J$-contraction for some diagonal signature matrix $J$.
\end{itemize}
\end{theorem}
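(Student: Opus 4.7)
My plan is to prove the cyclic chain $(iii) \Rightarrow (i) \Rightarrow (ii) \Rightarrow (iii)$. The implication $(i) \Rightarrow (ii)$ is immediate by restricting to $m \in \{1,\ldots,n\}$, and I expect the bulk of the work to lie in $(ii) \Rightarrow (iii)$, which I would approach via SDP duality. For $(iii) \Rightarrow (i)$, since $S = \oplus_{j=1}^d S_j$ is block diagonal, $S^{-1} \otimes I_m$ commutes with $W := \oplus_j I_{n_j}\otimes U_j$, so $(K \otimes I_m)W$ is similar via $S \otimes I_m$ to $((S^{-1}KS) \otimes I_m)W$, and I may assume $K$ itself is strictly $J$-contractive. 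If $\det(I - (K\otimes I_m)W) = 0$, then by the swap $\det(I - AB) = \det(I - BA)$ equivalently $\det(I - W^*(K^*\otimes I_m)) = 0$, yielding a nonzero $w$ with $(K^*\otimes I_m)w = Ww$. Since $J$ is diagonal it splits as $\oplus_j J_j$, which makes $J\otimes I_m$ commute with $W$ blockwise. Substituting $(K^*\otimes I_m)w = Ww$ into the strict positivity $\langle ((J-KJK^*)\otimes I_m)w, w\rangle > 0$ collapses the two quadratic terms and produces $0 > 0$, a contradiction.

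For $(ii) \Rightarrow (iii)$ I first reformulate $(iii)$ as an LMI: setting $P := SJS^*$, condition $(iii)$ becomes the existence of a block-diagonal Hermitian $P = \oplus_j P_j$ with $P - KPK^* > 0$, where invertibility of $P$ is automatic after a generic small block-diagonal Hermitian perturbation preserving the strict inequality. Applying Hahn--Banach separation to the image of the linear map $P \mapsto P - KPK^*$ against the open positive-definite cone in $\mathrm{Herm}(n)$, this LMI is infeasible iff there exists $Q \succeq 0$, $Q \neq 0$, whose block-diagonal part of $Q - K^*QK$ vanishes, i.e.\ $(K^*QK)_{ii} = Q_{ii}$ for every $i$. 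I then take a Cholesky factorization $Q = L^*L$ with $L \in \mathbb{C}^{r\times n}$, $r = \mathrm{rank}\, Q \leq n$, and block-column decomposition $L = [L_1|\cdots|L_d]$. The dual condition becomes the Gram equality $L_i^*L_i = M_i^*M_i$, where $M_i := \sum_j L_j K_{ji}$ is the $i$-th block column of $LK$; since $\mathrm{rank}\, L_i = \mathrm{rank}\, M_i$, the partial isometry $L_i x \mapsto M_i x$ extends to a unitary $V_i$ on $\mathbb{C}^r$ satisfying $V_i L_i = M_i$.

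The final move converts these unitaries into a noncommutative-torus zero. Column-stacking via $\mathrm{vec}(AMB) = (B^T \otimes A)\,\mathrm{vec}(M)$ rewrites the system $V_i L_i = \sum_j L_j K_{ji}$ as $(\oplus_i I_{n_i}\otimes V_i)\,\mathrm{vec}(L) = (K^T\otimes I_r)\,\mathrm{vec}(L)$, equivalently $W'(K^T\otimes I_r)\,\mathrm{vec}(L) = \mathrm{vec}(L)$ with $W' := \oplus_i I_{n_i}\otimes V_i^*$, and $W'$ is a legitimate noncommutative-torus unitary of size $r$. Invoking $\det(I - AB) = \det(I - BA)$ together with the overall-transpose identity $\det(I - (K^T\otimes I_r)(\oplus_j I_{n_j}\otimes U_j)) = \det(I - (K\otimes I_r)(\oplus_j I_{n_j}\otimes U_j^T))$ then yields $p(\bar V_1,\ldots,\bar V_d) = 0$ at the unitary tuple $(\bar V_1,\ldots,\bar V_d) \in (\mathbb{C}^{r\times r})^d$ with $r \leq n$, contradicting $(ii)$.

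The main obstacle, as I see it, is reconciling two distinct index structures: the Farkas dual naturally produces unitaries $V_i$ attached to the row blocks of $K$ (the equation index $i$), whereas the noncommutative-torus eigenvector equation is phrased with unitaries attached to the column blocks (the summand index $j$). The vectorization-plus-transpose maneuver is what bridges this gap, and its rank bound $r = \mathrm{rank}\, Q \leq n$ accounts precisely for the size restriction $m \leq n$ appearing in $(ii)$.
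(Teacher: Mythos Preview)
Your proof is correct and follows essentially the same route as the paper: the Hahn--Banach separation that trades the block-diagonal LMI for a nonzero $Q\succeq 0$ with vanishing block-diagonal of $Q-K^*QK$ is exactly the paper's Proposition~\ref{P:Wequiv}, your Gram/unitary extraction from the Cholesky factor matches Lemma~\ref{L:WKperp}, and your vectorization-plus-transpose step is Lemma~\ref{L:pzeroes}. The one genuine difference is in $(iii)\Rightarrow(i)$: the paper routes this through the same duality machinery (showing $\mathcal{W}_K^\perp\cap\mathrm{PSD}=\{0\}$ and then invoking the converse direction of Lemma~\ref{L:pzeroes}), whereas you give a direct eigenvector argument using that $J\otimes I_m$ commutes with $W$, which is shorter and more elementary. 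Your perturbation argument for the invertibility of $P$ is also a valid alternative to the paper's citation of the Stein inertia theorem.
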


Our main result will be proved in Section \ref{S:DetRepUniSings}. In the course of the proof we will also find a way to determine $d$-tuples of unitaries $(U_1, \ldots , U_d)$ in the case that $K$ is not structurally similar to a strict $J$-contraction, as will be illustrated by examples.   Note that a diagonal signature matrix is simply a diagonal matrix with each diagonal entry equal to either 1 or -1. The above result, could also be stated with $J$ being a signature matrix that is block diagonal with blocks of size $n_j \times n_j$ so that $J$ commutes with $\oplus_{j=1}^d z_j I_{n_j}$, but stating the results for diagonal signature matrices lead to simpler statements that have the same generality.

To conclude this introduction, we recall some standard notation and terminology used throughout this paper. We let ${\mathcal H}_{n}$ denote the vectorspace (over ${\mathbb R}$) of $n\times n$ Hermitian complex matrices. The inner product on ${\mathcal H}_{n}$ is given by $\langle A, B \rangle = {\rm Tr} (AB^*)= {\rm Tr} (AB)$, where ${\rm Tr} $ denotes the trace. We let ${\rm PSD}$ and ${\rm PD}$ denote the cones of positive semidefinite and positive definite matrices, respectively, where the size of the matrices should be clear from the context.

\section{Indefinite determinantal representations versus nonsingularities on the noncommutative $d$-torus}\label{S:DetRepUniSings} 

In this section we shall prove the main result of the paper, Theorem \ref{T:main}, after which we provide some illustrative examples. First we  present some auxiliary results. With a matrix $K \in {\mathbb C}^{n\times n}$ and a decomposition $n=\sum_{j=1}^d n_j$ we associate the following subspace of ${\mathcal H}_n$ 
$$
{\mathcal W}_K= \left\{ (\oplus_{j=1}^d H_{j}) - K (\oplus_{j=1}^d H_{j}) K^* \colon H_j\in {\mathcal H}_{n_j}, j=1,\ldots , d \right\}.
$$ 
We now describe the orthogonal complement ${\mathcal W}_K^\perp$ in $\cH_n$ and the intersection of ${\mathcal W}_K^\perp$ and ${\rm PSD}$.   

\begin{lemma}\label{L:WKperp}
Let $K \in {\mathbb C}^{n\times n}$ and  $n=\sum_{j=1}^d n_j$. The orthogonal complement ${\mathcal W}_K^\perp$ of ${\mathcal W}_K$ in $\cH_n$, with respect to the trace inner product, is given by
\[
{\mathcal W}_K^\perp= \left\{ L\in\cH_n \colon L - K^* L K = \begin{bmatrix}
           0_{n_1} & & {\huge *} \cr  & \ddots & \cr  {\huge *} &  & 0_{n_d}
       \end{bmatrix}  \right\}.
\] 
Moreover, let $L\in\textup{PSD}$ and factor $L$ as $L=XX^*$ for $X\in\mathbb{C}^{n \times m}$. Decompose 
$$X=\mat{X_1\\ \vdots \\ X_d} \mbox{ with } X_j\in\mathbb{C}^{n_j \times m},\ j=1,\ldots,d.$$ 
Then $L\in {\mathcal W}_K^\perp$ if and only if there exist unitary $V_1,\ldots,V_d\in\mathbb{C}^{m \times m}$ such that 
\begin{equation}\label{V1Vd}
K^* \mat{X_1\\ \vdots\\ X_d} =\mat{X_1 V_1\\ \vdots\\ X_d V_d}. 
\end{equation}
\end{lemma}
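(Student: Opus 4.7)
The plan is to first characterize $\mathcal{W}_K^\perp$ as a subspace of $\mathcal{H}_n$ by the standard trace-duality computation, and then to translate the resulting condition on $L = XX^*$ into a statement about the blocks of $X$ via the well-known fact that $AA^* = BB^*$ for equally-sized matrices forces $B = AV$ for some square unitary $V$.

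For the first statement, I would take an arbitrary $L\in\mathcal{H}_n$ and compute, using $L^*=L$ and the cyclic invariance of the trace,
\[
\langle L,\, (\oplus_{j=1}^d H_j) - K(\oplus_{j=1}^d H_j)K^* \rangle = \Tr\bigl((L-K^*LK)(\oplus_{j=1}^d H_j)\bigr).
\]
Writing $M:=L-K^*LK\in\mathcal{H}_n$ with block decomposition $(M_{ij})_{i,j=1}^d$ conforming to $n=\sum_{j=1}^d n_j$, the right-hand side equals $\sum_{j=1}^d \Tr(M_{jj}H_j)$, which vanishes for every choice of $(H_1,\ldots,H_d)\in\prod_j \mathcal{H}_{n_j}$ if and only if $M_{jj}=0$ for each $j$. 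This is exactly the condition describing $\mathcal{W}_K^\perp$ in the lemma.

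For the second statement, I would factor $L=XX^*$ and decompose $X$ as stated. Setting $Y:=K^*X$ with blocks $Y_j\in\mathbb{C}^{n_j\times m}$ inherited from the same row-partition, one has $K^*LK=YY^*$, so the first half of the lemma says $L\in\mathcal{W}_K^\perp$ is equivalent to $X_jX_j^* = Y_jY_j^*$ for every $j$. At this point I would invoke the auxiliary fact that whenever $A,B\in\mathbb{C}^{n_j\times m}$ satisfy $AA^*=BB^*$, there exists a unitary $V\in\mathbb{C}^{m\times m}$ with $B=AV$. This follows by comparing reduced singular value decompositions $A=U\Sigma W_A^*$ and $B=U\Sigma W_B^*$ (the common left-singular structure and common singular values being forced by $AA^*=BB^*$), then extending the column-orthonormal $W_A,W_B$ to full $m\times m$ unitaries $\widetilde W_A,\widetilde W_B$ and taking $V:=\widetilde W_A\widetilde W_B^*$. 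Applied to each pair $(X_j,Y_j)$ this yields unitaries $V_j\in\mathbb{C}^{m\times m}$ satisfying \eqref{V1Vd}. The converse direction is immediate: given such $V_j$, $Y_jY_j^*=X_jV_jV_j^*X_j^*=X_jX_j^*$ for every $j$, and the first half of the lemma gives $L\in\mathcal{W}_K^\perp$.

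The one place requiring care is the auxiliary lemma on $AA^*=BB^*$: one must handle repeated singular values, which introduce freedom in choosing the left-singular factors, and one must extend the (generally non-square) right-singular factors from partial isometries on the ranges of $A^*$ and $B^*$ to full unitaries on all of $\mathbb{C}^m$. Neither step is difficult, but both are what justify the passage from the equality of $n_j\times n_j$ matrices $X_jX_j^*$ to the existence of an $m\times m$ unitary $V_j$, irrespective of whether $m$ is larger or smaller than $n_j$. Beyond this, the proof is purely computational.
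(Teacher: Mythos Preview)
Your proposal is correct and follows essentially the same route as the paper: the trace-duality computation reducing $L\in\mathcal{W}_K^\perp$ to the vanishing of the diagonal blocks of $L-K^*LK$, followed by setting $Y=K^*X$ so that the condition becomes $X_jX_j^*=Y_jY_j^*$ and invoking the standard fact that this forces $Y_j=X_jV_j$ for some unitary $V_j$. The only difference is that you supply an SVD justification of that auxiliary fact, whereas the paper simply asserts it.
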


\begin{proof}
Note that $L\in\cH_n$ is in ${\mathcal W}_K^\perp$ if and only if ${\rm Tr} (LW) \ge 0$ for every $W\in {\mathcal W}_K $. For $W=(\oplus_{j=1}^d H_{j}) - K (\oplus_{j=1}^d H_{j}) K^*\in{\mathcal W}_K$ we have 
\begin{align*}
{\rm Tr} (LW) 
&= {\rm Tr} (L((\oplus_{j=1}^d H_{j}) - K (\oplus_{j=1}^d H_{j}) K^*))\\
&={\rm Tr} (L(\oplus_{j=1}^d H_{j})) - {\rm Tr} (LK (\oplus_{j=1}^d H_{j}) K^*)\\
&={\rm Tr} (L(\oplus_{j=1}^d H_{j})) - {\rm Tr} (K^*LK (\oplus_{j=1}^d H_{j}))
={\rm Tr} ((L- K^*LK)(\oplus_{j=1}^d H_{j})).
\end{align*} 
Hence $L\in{\mathcal W}_K^\perp$ if and only if 
$$ {\rm Tr}( (L-K^*LK) (\oplus_{j=1}^d H_{j}))=0 \mbox{ for every $H_j \in {\mathcal H}_{m_j}$, $j=1,\ldots, d$.}$$ 
The latter is equivalent to $L-K^*LK$ having its diagonal blocks equal to zero, which proves the formula for ${\mathcal W}_K^\perp$.

Let $L\in\textup{PSD}$ be as described in the lemma. Define 
\begin{equation}\label{Y}
Y=\mat{Y_1\\ \vdots\\ Y_d}=K^* \mat{X_1\\ \vdots\\ X_d} \mbox{ with } Y_j\in\mathbb{C}^{n_j \times m},\ j=1,\ldots,d.  
\end{equation}
Then $L\in{\mathcal W}_K^\perp$ is equivalent to $X_iX_i^*=Y_iY_i^*$ for $i=1,\ldots,d$, which in turn is equivalent to  $Y_i=X_i V_i$ for some unitary $V_i\in\mathbb{C}^{m\times m}$, $i=1,\ldots,d$.
\end{proof}

Next we show how $\mathcal{W}_K$ and $\mathcal{W}_K^\perp$ can be used in a criteria to determine if $K$ is structurally similar to a strictly $J$-contractive matrix, for $J$ some diagonal signature matrix. 

\begin{proposition}\label{P:Wequiv}
Let $K \in {\mathbb C}^{n\times n}$ and decompose $n=\sum_{j=1}^d n_j$. Then the following are equivalent.
   \begin{itemize}
       \item[(i)] There exists an invertible $S=\oplus_{j=1}^d S_j$, with $S_j \in {\mathbb C}^{n_j\times n_j}$, $j=1,\ldots , d$,  so that $S^{-1}KS$ is $J$-contractive for a diagonal signature matrix $J$.
       \item[(ii)] We have ${\mathcal W}_K \cap {\rm PD} \neq \emptyset$.
       \item[(iii)] We have ${\mathcal W}_K^\perp \cap {\rm PSD} =\{ 0 \}$.
   \end{itemize}
    \end{proposition}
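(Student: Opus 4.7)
The plan is to prove (i)$\Leftrightarrow$(ii) by an explicit spectral-factorization argument, and (ii)$\Leftrightarrow$(iii) by convex duality with respect to the trace inner product.

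For (i)$\Rightarrow$(ii), suppose $S=\oplus_{j=1}^d S_j$ is invertible and $T:=S^{-1}KS$ is a strict $J$-contraction for some diagonal signature matrix $J$. Writing $J=\oplus_{j=1}^d J_j$ with each $J_j\in\mathbb{C}^{n_j\times n_j}$ a diagonal signature matrix, and conjugating $J-TJT^*>0$ by $S$, we obtain $SJS^*-KSJS^*K^*>0$. Setting $H_j:=S_jJ_jS_j^*\in\cH_{n_j}$ and $H:=\oplus_j H_j=SJS^*$ places $H-KHK^*$ in ${\mathcal W}_K\cap\textup{PD}$. For the reverse implication (ii)$\Rightarrow$(i), given Hermitian $H_j\in\cH_{n_j}$ with $H:=\oplus_j H_j$ satisfying $H-KHK^*>0$, the key step is to show that each $H_j$ is invertible; since $H$ is block diagonal, this amounts to proving $H$ itself is invertible, a classical Stein-type fact. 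Pairing an eigenpair $Kv=\lambda v$ against the inequality yields $(1-|\lambda|^2)v^*Hv>0$, forcing $|\lambda|\neq 1$; decomposing $\mathbb{C}^n$ into $K$-invariant stable and unstable subspaces and solving the two block Stein equations that result (one via an absolutely convergent series in powers of $K$, the other in powers of $K^{-1}$), together with a Schur-complement argument, yields invertibility of $H$. Once invertibility is available, the spectral decomposition $H_j=U_jD_jU_j^*$ with $D_j$ diagonal and invertible lets us write $H_j=S_jJ_jS_j^*$ with $S_j:=U_j|D_j|^{1/2}$ invertible and $J_j:=\mathrm{sign}(D_j)$ a diagonal signature matrix; taking $S:=\oplus_j S_j$ and $J:=\oplus_j J_j$ and reversing the conjugation produces the required strict $J$-contraction $S^{-1}KS$.

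For (ii)$\Leftrightarrow$(iii), the main point is that $\textup{PSD}$ is self-dual with respect to the trace inner product and has $\textup{PD}$ as its interior in $\cH_n$. One direction is immediate: if $H\in{\mathcal W}_K\cap\textup{PD}$ and $0\neq L\in{\mathcal W}_K^\perp\cap\textup{PSD}$, then $\langle H,L\rangle=0$ by orthogonality, yet $\langle H,L\rangle=\textup{Tr}(HL)>0$ because $H$ is positive definite and $L$ is a nonzero PSD matrix, a contradiction. For the converse, if ${\mathcal W}_K\cap\textup{PD}=\emptyset$, we separate the (finite-dimensional, closed) subspace ${\mathcal W}_K$ from the open convex cone $\textup{PD}$ via Hahn--Banach, obtaining a nonzero linear functional $\langle\,\cdot\,,L\rangle$ which, since ${\mathcal W}_K$ is a linear subspace, must vanish on ${\mathcal W}_K$ and be nonnegative on $\textup{PD}$, hence on all of $\textup{PSD}$; self-duality of $\textup{PSD}$ then forces $L\in({\mathcal W}_K^\perp\cap\textup{PSD})\setminus\{0\}$, as required.

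The principal technical obstacle is the invertibility of $H$ in the (ii)$\Rightarrow$(i) step, since $H-KHK^*>0$ alone does not transparently rule out $H$ being singular when $K$ has eigenvalues of large modulus; all other steps are either direct linear-algebraic manipulations or standard convex separation arguments.
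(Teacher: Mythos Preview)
Your proof is correct and follows essentially the same route as the paper: the (i)$\Leftrightarrow$(ii) equivalence via conjugating by $S$ and factoring each $H_j$ as $S_jJ_jS_j^*$, and the (ii)$\Leftrightarrow$(iii) equivalence via Hahn--Banach separation of ${\mathcal W}_K$ from $\textup{PD}$, are exactly what the paper does.

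The only substantive difference is in (ii)$\Rightarrow$(i): the paper simply cites \cite[Section 13.2, Theorem 2]{LancasterTismenetsky} for the invertibility of $H=\oplus_j H_j$ when $H-KHK^*>0$, whereas you sketch the proof of that discrete-time inertia result directly (no eigenvalues on $\mathbb T$, stable/unstable splitting, series solution of the block Stein equations, Schur complement). Your sketch is sound, with one small imprecision: pairing with a right eigenvector $Kv=\lambda v$ does not immediately give $(1-|\lambda|^2)v^*Hv>0$, since it is $K^*$ that appears on the right in $v^*KHK^*v$; use an eigenvector of $K^*$ (equivalently a left eigenvector of $K$) instead, and the conclusion $|\lambda|\neq 1$ follows as you intend.
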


\begin{proof}
(i) $\to$ (ii). Let $S=\oplus_{j=1}^d S_j$ be so that $S^{-1}KS$ is strictly $J$-contractive for the diagonal signature matrix $J=\oplus_{j=1}^d J_j$, with $J_j\in {\mathcal H}_{n_j}$, that is, 
\begin{equation}\label{JKJK} J - S^{-1}KS J S^{*} K S^{*-1} >0. \end{equation} 
Define $H = SJS^*=\oplus_{j=1}^d S_{j}J_jS_j^*=:\oplus_{j=1}^d H_{j}$. Multiplying in \eqref{JKJK} with $S$ on the left and with $S^*$ in the right, 
we find that $H-KHK^*\in {\mathcal W}_K\cap {\rm PD}$. 
    
(ii) $\to$ (i). Let $(\oplus_{j=1}^d H_{j}) - K (\oplus_{j=1}^d H_{j}) K^* \in {\mathcal W}_K\cap {\rm PD}$. By \cite[Section 13.2, Theorem 2]{LancasterTismenetsky}, we have that $\oplus_{j=1}^d H_{j}$ is invertible. We may now write $H_j = S_jJ_jS_j^*$, $j=1,\ldots , d$, where $S_1, \ldots , S_d$  are invertible and $J_1, \ldots ,J_d$ are diagonal real unitaries (e.g., one can take a spectral decomposition $H_j=U_jD_jU_j^*$, and let $J_j=D_j|D_j|^{-1}$ and $S_j=U_j|D_j|^\frac12$). For this choice of $S$ we have \eqref{JKJK}, and hence (i) holds. 

The equivalence of (ii) and (iii) follows from a Hahn-Banach separation hyperplane argument. Indeed, ${\mathcal W}_K \cap {\rm PD} = \emptyset $ if and only if we can find a linear functional in the Hilbert space of Hermitian matrices separating the convex sets ${\mathcal W}_K $ and $ {\rm PD} $. In other words, we can find a $L\in {\mathcal H}_{n}$, so that ${\rm Tr} (LP)>0$ for every $P \in  {\rm PD}$, and ${\rm Tr} (LW) \le 0$ for every $W\in {\mathcal W}_K$. Since ${\mathcal W}_K$ is a vector space (over $\mathbb{R}$), this implies that ${\rm Tr} (LW) = 0$ for every $W\in {\mathcal W}_K$, for if ${\rm Tr} (LW) < 0$ would hold for a $W\in {\mathcal W}_K$, then ${\rm Tr} (L(-W)) > 0$. Hence $L\in {\mathcal W}_K^\perp$. And ${\rm Tr} (LP)>0$ for every $P \in  {\rm PD}$ is equivalent to $0\neq L \in {\rm PSD}$.
\end{proof}

Combining Proposition \ref{P:Wequiv} and Lemma \ref{L:WKperp} we can connect the elements of $\mathcal{W}_K^\perp \cap \textup{PSD}$ and points $(U_1,\ldots,U_d)$ on the noncommutative $d$-torus for which $p(U_1,\ldots,U_d)=0$. 

\begin{lemma}\label{L:pzeroes}
Assume $\mathcal{W}_K^\perp \cap \textup{PSD}\neq \{0\}$. Let $0\neq L\in \mathcal{W}_K^\perp \cap \textup{PSD}$ factor as $L=XX^*$ with $X\in \mathbb{C}^{n \times m}$ and let $V_1,\ldots,V_d$ be unitary matrices in $\mathbb{C}^{m \times m}$ such that \eqref{V1Vd} holds. Then $p(U_1,\ldots,U_d)=0$ for $U_j=V_j^T$, $j=1,\ldots,d$, more concretely,  ${\rm vec}(X^*)$ is in the cokernel of $I_n \otimes I_m- (K\otimes I_m) (\oplus_{j=1}^d I_{n_j}\otimes V_j^T)$. Moreover, all unitary $d$-tuples $(U_1,\ldots,U_d)$ such that $p(U_1,\ldots,U_d)=0$ are obtained in that way.
\end{lemma}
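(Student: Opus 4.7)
The plan is to prove all three assertions by translating between the vec/Kronecker formulation of the matrix
$$M := I_n \otimes I_m - (K \otimes I_m)\bigl(\oplus_{j=1}^d I_{n_j} \otimes V_j^T\bigr)$$
and the block-matrix equation $K^* X = [X_j V_j]_j$ supplied by Lemma~\ref{L:WKperp}. I would identify $\mathbb{C}^{nm}$ with $\mathbb{C}^{m \times n}$ via column-stacking vec, so that the block column decomposition $A = [A_1, \ldots, A_d]$ with $A_j \in \mathbb{C}^{m \times n_j}$ matches the direct sum partition. Together with the standard identities $(K \otimes I_m)\text{vec}(A) = \text{vec}(AK^T)$ and $(\oplus_j I_{n_j} \otimes V_j^T)\text{vec}([A_1, \ldots, A_d]) = \text{vec}([V_1^T A_1, \ldots, V_d^T A_d])$, the condition that $\text{vec}(A)$ lies in the cokernel of $M$ reduces, after dualizing, to a block-matrix identity of the form $A_j = V_j (AK)_j$, which is readily matched against Lemma~\ref{L:WKperp}.

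For the forward direction I would apply this with $A = X^*$. Taking the conjugate transpose of the relation $K^* X = [X_j V_j]_j$ yields $X^* K = [V_1^* X_1^*, \ldots, V_d^* X_d^*]$, and combined with unitarity $V_j V_j^* = I$ this is precisely the block identity needed. Since $L \neq 0$ forces $X \neq 0$, the vector $\text{vec}(X^*)$ is nonzero, hence $M$ is singular and $p(V_1^T, \ldots, V_d^T) = 0$.

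For the converse, given unitaries $(U_1, \ldots, U_d)$ with $p(U_1, \ldots, U_d) = 0$, the analogous matrix $M'$ is singular and admits a nonzero left null vector, which I would write as $\text{vec}(Z^*)$ for some $Z \in \mathbb{C}^{n \times m}$ block-partitioned as $Z = [Z_1^T, \ldots, Z_d^T]^T$. Setting $V_j := U_j^T$ (which is unitary iff $U_j$ is), reversing the vec computation converts the null-vector condition into $K^* Z = [Z_j V_j]_j$, which by Lemma~\ref{L:WKperp} says exactly that $L := ZZ^*$ is a nonzero element of $\mathcal{W}_K^\perp \cap \textup{PSD}$ whose factorization $L = ZZ^*$ realizes the prescribed $V_j$'s, so that $U_j = V_j^T$ as required.

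The main obstacle is purely bookkeeping. The vertical block decomposition of $X$ becomes a horizontal one for $X^*$, so the block structure of $\text{vec}(X^*)$ must be carefully matched against the direct-sum operator $\oplus_j(I_{n_j} \otimes V_j^T)$. One must also track the interplay between transposition and conjugation, since the transposes $V_j^T$ appearing in $M$ pair naturally with the conjugate transposes $V_j^*$ that emerge when dualizing the equation of Lemma~\ref{L:WKperp}. Once these identifications are set up correctly, each direction collapses to a single-line verification.
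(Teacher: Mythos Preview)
Your proposal is correct and follows essentially the same approach as the paper: both arguments use the standard vec/Kronecker identities to convert between the block relation $K^*X=[X_jV_j]_j$ from Lemma~\ref{L:WKperp} and the left-null-vector condition $\mathrm{vec}(X^*)^T M=0$, and both reverse the computation for the converse direction. The only cosmetic difference is that you first derive the general dictionary $A_j=V_j(AK)_j \Leftrightarrow \mathrm{vec}(A)^T M=0$ and then specialize to $A=X^*$, whereas the paper vectorizes the adjoint of \eqref{V1Vd} directly and then multiplies through by $\oplus_j I_{n_j}\otimes V_j$ and transposes; the underlying manipulation is identical.
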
  

\begin{proof}
Let $0\neq L\in \mathcal{W}_K^\perp \cap \textup{PSD}$ and let $X$ and $V_1,\ldots,V_d$ be as in the lemma. Taking adjoints on both sides in \eqref{V1Vd} and applying the vectorization operation, on the left hand side we get ${\rm vec} (X^* K)=K^T\otimes I_m {\rm vec}(X^*)$ while on the right hand side we have 
\begin{align*}
{\rm vec}(\mat{V_1^*X_1^*&\ldots&V_d^*X_d^*})
&=\mat{{\rm vec}(V_1^*X_1^*)\\ \vdots\\ {\rm vec}(V_d^*X_d^*)}
=\mat{(I_{n_1}\otimes V_1^*){\rm vec}(X_1^*)\\ \vdots\\ (I_{n_d}\otimes V_d^*){\rm vec}(X_d^*)}\\
&=(\oplus_{j=1}^dI_{n_j}\otimes V_j^*) {\rm vec}(X^*).
\end{align*}  
Multiplying with the unitary matrix $\oplus_{j=1}^dI_{n_j}\otimes V_j$ on both sides and taking transposes it follows that
\[
{\rm vec}(X^*)^T(I_{n}\otimes I_m - (K\otimes I_m)(\oplus_{j=1}^dI_{n_j}\otimes V_j^T))=0,
\]
which shows that $p(V_1^T,\ldots,V_d^T)=0$. For the converse implication, assume that $U_1,\ldots,U_d$ are $m \times m$ unitary matrices such that $p(U_1,\ldots,U_d)=0$, set $V_j=U_j^T$, $j=1,\ldots,d$, and let $v\in \mathbb{C}^{nm}$ be such that $v^T (I_{n}\otimes I_m - (K\otimes I_m)(\oplus_{j=1}^dI_{n_j}\otimes V_j^T))=0$. Let $X\in\mathbb{C}^{n \times m}$ be the matrix such that ${\rm vec}(X^*)=v$. Reversing the above computation shows that \eqref{V1Vd} holds, and hence $L=XX^*$ is in $\mathcal{W}_K^\perp \cap \textup{PSD}$ by Lemma \ref{L:WKperp}.
\end{proof}

We now prove our main result.

\begin{proof}[Proof of Theorem \ref{T:main}]
(iii) $\to$ (i). If (iii) holds, then $\mathcal{W}_K^\perp \cap \textup{PSD}=\{0\}$, by Proposition \ref{P:Wequiv}, which in turn implies that $p(U_1,\ldots,U_d)\neq 0$ for any $d$-tuple of unitary matrices $(U_1,\ldots,U_d)$, by Lemma \ref{L:pzeroes}.

(i) $\to$ (ii). Trivial.

(ii) $\to$ (iii). We show that (ii) implies that ${\mathcal W}_K^\perp \cap {\rm PSD}= \{ 0 \}$, so that (iii) follows by Proposition \ref{P:Wequiv}. Assume to the contrary that there exists an $0\neq L\in {\mathcal W}_K^\perp \cap {\rm PSD}$ and factor $L=XX^*$ with $X\in\mathbb{C}^{n \times m}$ with $m={\rm rank}(L)\leq n$. By Lemma \ref{L:pzeroes} we get that $p(U_1,\ldots,U_d)=0$ for a $d$-tuple of unitaries $(U_1,\ldots,U_d)$ of size $m \times m$, in contradiction with (ii). Hence (ii) implies (iii). 
\end{proof}

Since the zeroes of $p(U_1,\ldots,U_d)$ on the noncommutative $d$-torus are connected to elements of the cone ${\mathcal W}_K^\perp \cap {\rm PSD}$, it might be of interest to understand the structure of this cone better. In the next proposition we determine a necessary and sufficient condition under which an element in ${\mathcal W}_K^\perp \cap {\rm PSD}$ is an extreme ray. For a matrix $Z=(z_{kl})_{k,l}$ we define $\overline{Z}=(\overline{z_{kl}})_{k,l}=(Z^*)^T$. 

\begin{proposition} 
Let $K \in {\mathbb C}^{n\times n}$ and decompose $n=\sum_{j=1}^d n_j$. Factor $0\neq L\in {\mathcal W}_K^\perp \cap {\rm PSD}$ as $L=XX^*$ with $X\in\mathbb{C}^{n \times m}$, with $m={\rm rank}(L)$, and define $Y$ by \eqref{Y}. Then $L$ generates an extreme ray if and only if 
$${\rm dim}\cap_{j=1}^d\Ker (\overline{X}_j \otimes X_j - \overline{Y}_j \otimes Y_j)=1.$$ 
\end{proposition}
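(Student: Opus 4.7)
The plan is to parametrize the PSD matrices $L'$ satisfying $0\le L'\le L$ and identify which of them lie in $\mathcal{W}_K^\perp$. Since $L=XX^*$ with $X$ of full column rank $m=\rank(L)$, every such $L'$ has $\Ran L'\subseteq \Ran X$, and so is of the form $L'=XMX^*$ for a unique $M\in\mathcal{H}_m$ with $0\le M\le I_m$; explicitly $M=X^+L'(X^+)^*$ with $X^+=(X^*X)^{-1}X^*$ a left inverse of $X$. Under this bijection $L$ itself corresponds to $M=I_m$.

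Next, using $Y=K^*X$ from \eqref{Y}, we have $L'-K^*L'K=XMX^*-YMY^*$, so by Lemma \ref{L:WKperp} the condition $L'\in\mathcal{W}_K^\perp$ becomes $X_jMX_j^*=Y_jMY_j^*$ for $j=1,\ldots,d$. Vectorizing via ${\rm vec}(AMB)=(B^T\otimes A){\rm vec}(M)$, and using $(X_j^*)^T=\overline{X}_j$, this translates into the linear condition ${\rm vec}(M)\in V:=\cap_{j=1}^d\Ker(\overline{X}_j\otimes X_j-\overline{Y}_j\otimes Y_j)$; note that ${\rm vec}(I_m)\in V$ since $L\in\mathcal{W}_K^\perp$.

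Therefore $L$ generates an extreme ray if and only if the only Hermitian $M$ with $0\le M\le I_m$ and ${\rm vec}(M)\in V$ are scalar multiples of $I_m$. Setting $V_H:=\{M\in\mathcal{H}_m:{\rm vec}(M)\in V\}$, a real subspace, this is in turn equivalent to $V_H=\mathbb{R}\cdot I_m$: if some $N\in V_H$ were not a scalar multiple of $I_m$, then $\tfrac{1}{2}(I_m+tN)\in V_H$ lies strictly between $0$ and $I_m$ for small $t>0$, producing a nontrivial decomposition of $L$ inside $\mathcal{W}_K^\perp\cap{\rm PSD}$; the converse implication is immediate.

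It remains to identify $\dim_{\mathbb{R}}V_H$ with $\dim_{\mathbb{C}}V$. Taking adjoints of $X_jMX_j^*=Y_jMY_j^*$ shows that the anti-linear involution $M\mapsto M^*$ preserves $V$, and then writing $M=\tfrac{1}{2}(M+M^*)+i\cdot\tfrac{1}{2i}(M-M^*)$ yields the real-direct-sum decomposition $V=V_H\oplus iV_H$, so $\dim_{\mathbb{C}}V=\dim_{\mathbb{R}}V_H$. Combined with the previous paragraph this proves the equivalence. The main step requiring care is this last passage between the real dimension of the Hermitian slice and the complex dimension of $V$; the parametrization and vectorization are otherwise routine linear algebra once Lemma \ref{L:WKperp} is in hand.
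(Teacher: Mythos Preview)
Your proof is correct and follows essentially the same approach as the paper's: both parametrize the elements $0\le L'\le L$ in $\mathcal{W}_K^\perp$ by Hermitian $M$ with $0\le M\le I_m$ via $L'=XMX^*$, vectorize the block-diagonal condition from Lemma~\ref{L:WKperp} to land in $V=\cap_j\Ker(\overline{X}_j\otimes X_j-\overline{Y}_j\otimes Y_j)$, and then pass between Hermitian and general elements of $V$ (the paper by writing $K=K_1+iK_2$, you by the conjugate-linear involution $M\mapsto M^*$ giving $V=V_H\oplus iV_H$). Your packaging of this last step as the identity $\dim_{\mathbb C}V=\dim_{\mathbb R}V_H$ is a bit cleaner, and your shift $\tfrac12(I_m+tN)$ plays the same role as the paper's $\|K+\|K\|I_m\|^{-1}(K+\|K\|I_m)$.
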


\begin{proof}
Let $0\neq L\in {\mathcal W}_K^\perp \cap {\rm PSD}$ and write $L=XX^*$ as in Lemma \eqref{L:WKperp} with $m={\rm rank}L$ and define $Y$ as in \eqref{Y}. Then $L$ generates an extreme ray if and only if whenever  $L=L_1+L_2$ with $L_1,L_2\in{\mathcal W}_K^\perp \cap {\rm PSD}$ we necessarily have that $L_1,L_2\in {\rm span}\{L\}$. Assume that $L=L_1+L_2$ with $L_1$ and $L_2$ as above. Since for $k=1,2$ we have $0\leq L_k\leq L$ it follows that $L_k=XH_kX^*$ with $0\leq H_k\leq I$ and $H_1+H_2=I_m$ and $L_k\in {\rm span}\{L\}$ corresponds to $H_k\in {\rm span}\{I_m\}$, while $L_k\in{\mathcal W}_K^\perp$ corresponds to $X_jH_kX_j-Y_jH_kY_j=0$ for $j=1,\ldots,d$. Since $\Ker X=\{0\}$ (because $m={\rm rank}(L)$), $L_k$ and $H_k$ determine each other uniquely, and it follows that $L$ generates an extreme ray if and only if for all  $H\in {\rm PSD}$ with $H\leq I$ (take $H_1=H$ and $H_2=I-H$) and $X_jHX_j^*-Y_jHY_j^*=0$ for $j=1,\ldots,d$ we must have that $H\in {\rm span}\{I_m\}$.  

Note that for any $H\in\mathbb{C}^{m \times m}$ and for $j=1,\ldots,d$ we have
\[
{\rm vec} (X_jHX_j^*-Y_jHY_j^*)= (\overline{X}_j \otimes X_j - \overline{Y}_j \otimes Y_j) {\rm vec}(H).
\]
Denote $R_j:=\overline{X}_j \otimes X_j - \overline{Y}_j \otimes Y_j$, $j=1,\ldots,d$. Since $X_jX_j^*=Y_jY_j^*$, it follows that ${\rm vec}(I_m)$ is in $\Ker R_j$ for each $j$. When $L$ does not generate an extreme ray, then there exists an $0\leq H\leq I$ such that $H\not\in{\rm span}\{I_m\}$ and ${\rm vec}(H)\in \Ker R_j$ for each $j$, so that $\cap_{j=1}^d \Ker R_j$ has dimension at least 2. Conversely, assume that the dimension of $\cap_{j=1}^d \Ker R_j$ is at least 2. Then there exists a $K\not\in{\rm span}\{I_m\}$, such that ${\rm vec}(K)\in \Ker R_j$ for each $j$, and thus $X_jKX_j^*-Y_jKY_j^*=0$ for $j=1,\ldots,d$, however, $K$ need not be in ${\rm PSD}$. Write $K=K_1+ i K_2$ with $K_1,K_2\in \cH_m$. Then also $X_jK_kX_j^*-Y_jK_kY_j^*=0$ for $j=1,\ldots,d$ and $k=1,2$ and $K_1$ and $K_2$ cannot both be in ${\rm span}\{I_m\}$. Hence we may assume without loss of generality that $K\in \cH_m$. Since $\cap_{j=1}^d \Ker R_j$ is a vector space containing both $K$ and $I_m$, it follows that 
$$H=\|K+\|K\|I_m\|^{-1}(K+\|K\|I_m)\in \cap_{j=1}^d \Ker R_j$$ 
satisfies $0\leq H \leq I$ and $H\notin{\rm span}\{I_m\}$. Hence $L$ does not generate an extreme ray.
\end{proof}

We next provide a sufficient condition under which an $0\neq L\in {\mathcal W}_K^\perp \cap {\rm PSD}$ does not generate an extreme ray. 

\begin{lemma}
Factor $0\neq L\in {\mathcal W}_K^\perp \cap {\rm PSD}$ as $L=XX^*$ with $X\in\mathbb{C}^{n \times m}$, where $m={\rm rank}(L)$, and let $V_1,\ldots,V_d$ be $m \times m$ unitary matrices as in \eqref{V1Vd}. If $V_1,\ldots,V_d$ have a common nontrivial invariant subspace, then $L$ does not generate an extreme ray of ${\mathcal W}_K^\perp \cap {\rm PSD}$. 
\end{lemma}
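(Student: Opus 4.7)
The plan is to invoke the criterion from the preceding proposition: $L$ generates an extreme ray of $\mathcal{W}_K^\perp \cap \textup{PSD}$ if and only if $\dim\cap_{j=1}^d\Ker R_j = 1$, where $R_j := \overline{X}_j \otimes X_j - \overline{Y}_j \otimes Y_j$. Since $L\in \mathcal{W}_K^\perp$ forces $X_jX_j^*=Y_jY_j^*$, the vector ${\rm vec}(I_m)$ already lies in $\cap_{j=1}^d\Ker R_j$. It therefore suffices to exhibit a single $H\in\mathbb{C}^{m\times m}\setminus {\rm span}\{I_m\}$ such that $X_jHX_j^*=Y_jHY_j^*$ for every $j$; this will force $\dim\cap_{j=1}^d\Ker R_j \geq 2$ and hence rule out extremality.

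The natural candidate will be $H=P$, the orthogonal projection onto a common nontrivial invariant subspace $M\subseteq \mathbb{C}^m$ of $V_1,\ldots,V_d$. Because each $V_j$ is unitary and $M$ is finite-dimensional, $V_j|_M$ is an isometric injection of $M$ into itself, hence a bijection, so $V_jM=M$ and likewise $V_j^*M=M$. This gives $V_jP=PV_j$, and in particular $V_jPV_j^*=P$. Using $Y_j=X_jV_j$ from \eqref{V1Vd}, I then compute
\[
Y_jPY_j^* = X_jV_jPV_j^*X_j^* = X_jPX_j^*,
\]
so ${\rm vec}(P)\in \Ker R_j$ for every $j$. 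Since $M$ is nontrivial and proper, $P$ is neither $0$ nor $I_m$, and hence $P$ and $I_m$ are linearly independent.

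Putting these observations together, $\cap_{j=1}^d\Ker R_j$ has dimension at least two, and the preceding proposition delivers the conclusion that $L$ does not generate an extreme ray. There is no serious obstacle; the one point that merits a careful word is the passage from \emph{invariant} to \emph{reducing} for the $V_j$, which is what makes $P$ commute with each $V_j$. Without that reduction step (which is automatic in finite dimensions because a unitary restricts to a bijection on any invariant subspace), the projection $P$ would not satisfy $V_jPV_j^*=P$ and the chosen candidate would fail.
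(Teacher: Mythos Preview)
Your proof is correct and rests on the same key observation as the paper's, namely that $V_jPV_j^*=P$ for the orthogonal projection $P$ onto the common invariant subspace. The only difference is cosmetic: the paper writes the decomposition $L=XPX^*+X(I-P)X^*$ directly and checks that both summands lie in $\mathcal{W}_K^\perp\cap\textup{PSD}$, whereas you route through the dimension criterion of the preceding proposition, which amounts to the same computation.
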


\begin{proof} Let ${\mathcal{C}}$ be a common nontrivial invariant subspace of $V_1,\ldots,V_d$, and let  
$P_{\mathcal{C}}$ be the orthogonal projection onto $\mathcal{C}$. Now we may write
$$ L= XP_{\mathcal{C}}X^* + X(I- P_{\mathcal{C}}) X^* =: L_1 + L_2. $$
We claim that $L_1, L_2 \in {\mathcal W}_K^\perp \cap {\rm PSD}$. Clearly, $L_1, L_2 \in {\rm PSD}$. Next, 
$$ L_1 - K^* L_1 K = \mat{X_1\\ \vdots\\ X_d} P_{\mathcal{C}} \mat{X_1^* & \dots &  X_d^* } - \mat{X_1V_1\\ \vdots\\ X_dV_d} P_{\mathcal{C}} \mat{V_1^*X_1^* & \dots &  V_d^* X_d^* }. $$
Since $V_iP_{\mathcal{C}}V_i^* = P_{\mathcal{C}}$, $i=1,\ldots, d,$ we obtain that the block diagonal entries of $L_1 - K^* L_1 K$ are zero. Thus $L_1\in {\mathcal W}_K^\perp$. Then, also $L_2=L-L_1 \in {\mathcal W}_K^\perp$, which finishes the proof.
\end{proof}

The following example shows that the condition above is indeed only sufficient.

\begin{example}  \rm
Let 
$$ K= \begin{bmatrix}
    1 & 0 & 0 & \frac{1}{\sqrt{2}} \cr 
    0 & -1 & 0 & 0 \cr 
    0 & 0 & i & \frac{1}{\sqrt{2}} \cr 
    0 & 0 & 0 & 0 
\end{bmatrix} \quad\mbox{and}\quad L= \begin{bmatrix}
    1 & 0 & 0 & 0 \cr 
    0 & 1 & 0 & 1 \cr 
    0 & 0 & 1 & 0 \cr 
    0 & 1 & 0 & 1 
\end{bmatrix}. 
$$
In this case we find that $$
X_1=I_3, X_2=\begin{bmatrix} 0 & 1 & 0 \end{bmatrix}, Y_1= \begin{bmatrix} 1 & 0 & 0 \cr 0 & -1 & 0 \cr 0 & 0 & -i  \end{bmatrix},
Y_2=\begin{bmatrix} \frac{1}{\sqrt{2}} & 0 & \frac{1}{\sqrt{2}}\end{bmatrix}.$$ This forces
$$ V_1 = \begin{bmatrix} 1 & 0 & 0 \cr 0 & -1 & 0 \cr 0 & 0 & -i  \end{bmatrix} , V_2 = \begin{bmatrix} \frac{\beta}{\sqrt{2}} \sin t & \gamma \cos t &  \frac{-\beta}{\sqrt{2}} \sin t\cr \frac{1}{\sqrt{2}} & 0 & \frac{1}{\sqrt{2}} \cr \frac{\alpha\beta}{\gamma\sqrt{2}} \cos t & -\alpha \sin t & \frac{-\alpha\beta}{\gamma\sqrt{2}} \cos t  \end{bmatrix}, $$
where $|\alpha|=|\beta|=|\gamma|=1$ and $t\in {\mathbb R}$. The only invariant subspaces of $V_1$ are spanned by standard basis vectors, but any such nontrivial subspace is not an invariant subspaces for $V_2$. In addition, $L$ does not generate an extreme ray of ${\mathcal W}_K^\perp \cap {\rm PSD}$, as one may write $L$ as 
$$ L = \begin{bmatrix} X_1 \cr X_2 \end{bmatrix} \begin{bmatrix} 1 & 0 & 0 \cr 0 & \frac12 & 0 \cr 0 & 0 & 0  \end{bmatrix} \begin{bmatrix} X_1^* & X_2^* \end{bmatrix} +  \begin{bmatrix} X_1 \cr X_2 \end{bmatrix} \begin{bmatrix} 0 & 0 & 0 \cr 0 & \frac12 & 0 \cr 0 & 0 & 1  \end{bmatrix} \begin{bmatrix} X_1^* & X_2^* \end{bmatrix}=: L_1 + L_2,$$
with $L_1, L_2 \in {\mathcal W}_K^\perp \cap {\rm PSD}$.
This shows that it can happen that the unitaries $V_i$, $i=1,2$, do not have a common nontrivial invariant subspace, while $L$ still does not generate an extreme ray of ${\mathcal W}_K^\perp \cap {\rm PSD}$.
\end{example}

\begin{remark}\rm 
    Note that in Theorem \ref{T:main}(ii) it suffices to consider $m$ ranging from $1$ to $\max\{1,n-1\}$, as in the proof of (ii) $\to$ (iii) it suffices to consider matrices $L$ that generate an extreme ray. When $n\ge 2$ any extreme ray will consist of singular matrices, and thus they have rank at most $n-1$. 
\end{remark}

The following example illustrates that the same polynomial will have different determinantal representations, some of which can be made into a $J$-contractive determinantal representation via a block diagonal similarity, while other determinantal representations do not allow for this. 

    \begin{example}\label{ex}\rm
        Let 
        
    $$ K = \begin{bmatrix}
            \frac12 & 0 & 0 & \frac12 \cr 
            0 & -\frac12 & \frac12 & 0 \cr \frac12
            & 0 & 0 & \frac12 \cr 0& -\frac12 & \frac12 & 0  
        \end{bmatrix} $$ and $n_1=n_2=2$. Then the corresponding polynomial $p$ equals 
        $$ p(z_1,z_2)=1-\frac14 (z_1^2+z_2^2).$$ If we let 
        $$ L= \begin{bmatrix}
            1 & 0 & 1 & 0 \cr 
            0 & 1 & 0 & 1 \cr 
            1 & 0 & 1 & 0 \cr 
            0 & 1 & 0 & 1 
        \end{bmatrix} = \mat{I_2\\I_2} \mat{I_2 & I_2}=XX^*.$$ Then 
        \begin{align*}
K^* X=\mat{1&0\\0&-1\\0&1\\1&0}=\mat{I_2 V_1\\ I_2 V_2}\mbox{ with }V_1=\mat{1&0\\0&-1},\, V_2=\mat{0&1\\1&0}.
        \end{align*}
Hence $L \in {\mathcal W}_K^\perp \cap {\rm PSD} $, by Lemma \ref{L:WKperp}. Thus by Proposition \ref{P:Wequiv} we have that $K$ cannot be made into a strict $J$-contraction by a block diagonal similarity, and it follows by Lemma \ref{L:pzeroes} that $p(U_1,U_2)=0$ for $U_i=V_i^T=V_i$, $i=1,2$, as can easily be computed.

Of course, as $p(z)$ is strictly stable, a determinantal representation exists for it with (a different) $K$ that is a strict contraction; see \cite{Kummert, GKVW2013}.  Indeed, if we apply the procedure in \cite{GKVW2013}, we find that $p(z_1,z_2)= \det(I_4 - \widetilde{K}(z_1I_2\oplus z_2I_2))$ with 
$$\widetilde{K}=\begin{bmatrix}
    \frac12 & 0& \frac14& 0\cr0&-\frac12&-\frac14&0\cr0&0&0&1\cr\frac14&\frac14&\frac14&0\end{bmatrix}.
$$ After a similarity with $S={\rm diag}(1,1,1,\frac34)$, we find that $S^{-1}\widetilde{K}S$ is a strict contraction (thus $J=I$ in this case). One also easily checks that with $U_1$ and $U_2$ given above we have $\det(I_4\otimes I_2 - \widetilde{K}\otimes I_2(I_2\otimes U_1\oplus I_2\otimes U_2))=1/4$.
\end{example}

\subsection*{Acknowledgements}
This work is based on research supported in part by the National Research Foundation of South Africa (NRF, Grant Numbers 118513 and 127364) and the DSI-NRF Centre of Excellence in Mathematical and Statistical Sciences (CoE-MaSS). Any opinion, finding and conclusion or recommendation expressed in this material is that of the authors and the NRF and CoE-MaSS do not accept any liability in this regard. We also gratefully acknowledge funding from the National Graduate Academy for Mathematical and Statistical Sciences (NGA(MaSS)) and the University Capacity Development Programme (UCDP) from the Department of Higher Education and Training (DHET). In addition, HW is partially supported by United States National Science Foundation (NSF) grant DMS-2000037.

\bibliographystyle{plain}

\end{document}